\DeclareFontFamily{OT1}{pzc}{}
\DeclareFontShape{OT1}{pzc}{m}{it}{<-> s * [1.10] pzcmi7t}{}
\DeclareMathAlphabet{\mathpzc}{OT1}{pzc}{m}{it}
\definecolor{Red}{cmyk}{0,1,1,0.2}
\newcommand{\N}{\mathbb N}
\newcommand{\Z}{\mathbb Z}
\newcommand{\R}{\mathbb R}
\def\R{\mathbb R}
\def\N{\mathbb N}
\def\Z{\mathbb Z}
\def\E{\mathbb E}
\def\P{\mathbb P}
\def\ep{\epsilon}
\newcommand{\be}{\begin{equation}}
\newcommand{\ee}{\end{equation}}
\def\1{{\bf 1}}
\newtheorem{Theorem}{Theorem}[section]
\newtheorem{Definition}[Theorem]{Definition}
\newtheorem{Proposition}[Theorem]{Proposition}
\newtheorem{Lemma}[Theorem]{Lemma}
\newtheorem{Corollary}[Theorem]{Corollary}
\begin{document}

\title{From heterogeneous microscopic traffic flow models to macroscopic models}
\author{\renewcommand{\thefootnote}{\arabic{footnote}}
  P. Cardaliaguet\footnotemark[1], N. Forcadel\footnotemark[2]}
\footnotetext[1]{CEREMADE, UMR CNRS 7534, Universit\'e Paris-Dauphine PSL,
Place de Lattre de Tassigny, 75775 Paris Cedex 16, France}
\footnotetext[2]{Normandie Univ, INSA de Rouen Normandie, LMI (EA 3226 - FR CNRS 3335), 76000 Rouen, France, 685 Avenue de l'Universit\'e, 76801 St Etienne du Rouvray cedex.
France}

\maketitle

\begin{abstract}
The goal of this paper is to derive rigorously macroscopic traffic flow models from microscopic models. More precisely, for the microscopic models, we consider follow-the-leader type models with different types of drivers and vehicles which are distributed randomly on the road. After a rescaling, we show that the cumulative distribution function converge to the solution of a macroscopic model. We also make the link between this macroscopic model and the so-called LWR model.
\end{abstract}

\paragraph{AMS Classification:} 35D40, 90B20, 35R60, 35F20, 35F21.

\paragraph{Keywords:} traffic flow, heterogeneous microscopic models, macroscopic models, stochastic homogenisation.

\section{Introduction}
The modelling and the simulation of traffic flow is a challenging task in particular in order to design infrastructure. In particular, it can allow us to understand how the traffic will react to a change in the infrastructure of the road (if it is interesting to place a traffic light, if a bridge would help the traffic flow, how would a moderator affect the traffic...). Indeed, there are some examples in which the construction of a new infrastructure did not improve the traffic. For example, in Stuttgart, Germany, after investments into the road network in 1969, the traffic situation did not improve until a section of newly built road was closed for traffic again (see \cite{K69}). This is known as the Braess' paradox. During the last years, a lot of work has been done concerning the modelling of traffic flows problems. 

The goal of this paper is to obtain rigorously macroscopic trafic flow models by rescaling microscopic models. From a modelling point of view, microscopic models describe the dynamics of each vehicles individually. The most famous microscopic models are those of follow-the-leader type, also known as car-following models. In such a model, the dynamics of each vehicle depends on the vehicles in front, so that, in a cascade, the whole traffic flow can be determined by the dynamics of the very first vehicle (the leader). Two very popular models are the Bando model \cite{ModelBando} and the Newell model \cite{newell}, which describe respectively the acceleration and the velocity of each vehicle as a function (called optimal velocity function) of the inter-distance with the vehicle in front of it. The main advantage of these methods is that one can easily distinguish each vehicle and then associate different attributes (like maximal velocity, maximal acceleration...) to each vehicle. It is also possible to describe microscopic phenomena like red lights, slowdown or change of the maximal velocity. The main drawback is for numerical simulations where we have to treat a large number of data, which can be very expensive for example if we want to simulate the traffic at the scale of a town. 

On the contrary, macroscopic models consist in describing the collective behaviour of the particles for example by giving an evolution law on the density of vehicles. Concerning the modelling of the traffic flow, the oldest macroscopic model is the LWR model (Lighthill, Whitham \cite{LW}, Richards \cite{richards}, see also the book \cite{piccoli} for a good introduction to this models), which dates back to 1955 and is inspired by the laws of fluid dynamics. More recently, some macroscopic models propose to describe the flow of vehicles in terms of the averaged spacing between the vehicles (in some sense, the inverse of the density, see the works of Leclercq, Laval and Chevallier \cite{leclercq}). The main advantage of these macroscopic models is that it is possible to make numerical simulations on large portion of road. On the other hand, it is more complicated to describe microscopic phenomena or attributes. Generally speaking, microscopic models are considered more justifiable because the behaviour of every single particle can be described with high precision and it is immediately clear which kind of interactions are considered. On the contrary, macroscopic models are based on assumptions that are hardly correct or at least verifiable. As a consequence, it is often desirable establishing a connection between microscopic and macroscopic models so to justify and validate the latter on the basis of the verifiable modelling assumptions of the former.

Connections between microscopic and macroscopic fluid-dynamics traffic flow models are already well understood in certain cases of vehicles moving on a single road. The goal of this paper is to generalise these results to the case of different types of drivers which are distributed randomly on the road. The existing results mainly consider the case of one type of vehicles (see for example \cite{rigorousLWR} or \cite{GoatinRossi} for the case of non-local intercations) or several types of vehicles/drivers but distributed in a periodic way on the road (see \cite{homogenisationOVF}). To the best of our knowledge, the only result in the random case is a formal derivation obtain in the paper of Chiabaut, Leclercq and Buisson \cite{CLB09}.

\subsection{Description of the model and assumptions}
In this paper, we consider a stochastic first order follow-the-leader model. More precisely, we assume that each car has a type $z$ and that the optimal velocity $V_z(p)$ of a car, which expresses the velocity of the car in function of the distance $p$ of this car to the car in front of it, depends on this type $z$.  The set of types is a compact metric space $({\mathcal Z}, d_{\mathcal Z})$. A typical (and realistic) example is when ${\mathcal Z}$ is a finite set. 
We suppose that the cars are labelled by an index $i$, with $i\in \Z$, the position of the cars being an increasing function of $i$. Our main structure assumption is that the type of car $i$ is a random variable $Z_i$ with values in ${\mathcal Z}$ and that the $(Z_i)$ are i.i.d. random variables on a probability space $(\Omega, {\mathcal F}, \P)$. The types $(Z_i)$ do not evolve in time and are  fixed throughout the paper. {\it Without loss of generality we also suppose throughout the paper that the law of $Z_0$ (and therefore of all the $Z_i$) has full support,} since we can always restrict the compact metric space $({\mathcal Z}, d_{\mathcal Z})$ to the support of the law of $Z_0$.

On the optimal velocity map $V:{\mathcal Z}\times \R_+\to \R$, we assume the following:
\begin{itemize}
\item[$(H_1)$] The map $(z,p)\to V_z(p)$ is uniformly continuous on ${\mathcal Z}\times \R_+$ and $p\to V_z(p)$ is Lipschitz continuous, uniformly with respect to $z\in {\mathcal Z}$;
\item[$(H_2)$] For any $z\in {\mathcal Z}$, there exists $h_0^z>0$ (depending in a measurable way on $z$) such that $V_z(p)=0$ for all $p\in[0,h_0^z]$;
\item[$(H_3)$] For any $z\in {\mathcal Z}$,  $p\to V_z(p)$ is increasing in $[h_0^z,+\infty)$;
\item[$(H_4)$] There exists $V_{\max}>0$ and, for any $z\in {\mathcal Z}$, there exists $V^z_{\max}\leq V_{\max}$, such that $\lim_{p\to +\infty}V_z(p)=V^z_{\max}$.\\
Under assumptions (H1)---(H4), the map $V_z:  [h_0^z,+\infty)\to [0, V^z_{\max})$ is increasing and continuous for any $z\in {\mathcal Z}$ and we denote by $V^{-1}_z$ its inverse. For simplicity of notation, we set
$\bar h_0:=\E[h_0^{Z_0}]$ and by $\underline 
{V_{\max}}:=\inf_{z\in {\mathcal Z}} V_{\max}^z$. We need a last assumption, on the statistical distribution of $V^{Z_0}_{\max}$: 
\item[$(H_5)$] $\displaystyle \lim_{\theta\to \underline {V_{\max}}^-} \E\left[ V^{-1}_{Z_0} (\theta)\right] = +\infty.$   
\end{itemize}
The last condition (H5) is merely technical. It is clearly satisfied if ${\mathcal Z}$ is finite because in this case $\P[V^{Z_0}_{\max}= \underline {V_{\max}}]>0$. It roughly says that, statistically, $V^{Z_0}_{\max}$ is close to $\underline {V_{\max}}$ with a sufficiently large probability. The schematic representation of the optimal velocity functions is given in Fig. \ref{OVF}. 
\begin{figure}[!ht]
  \centering
  \includegraphics[width=120mm]{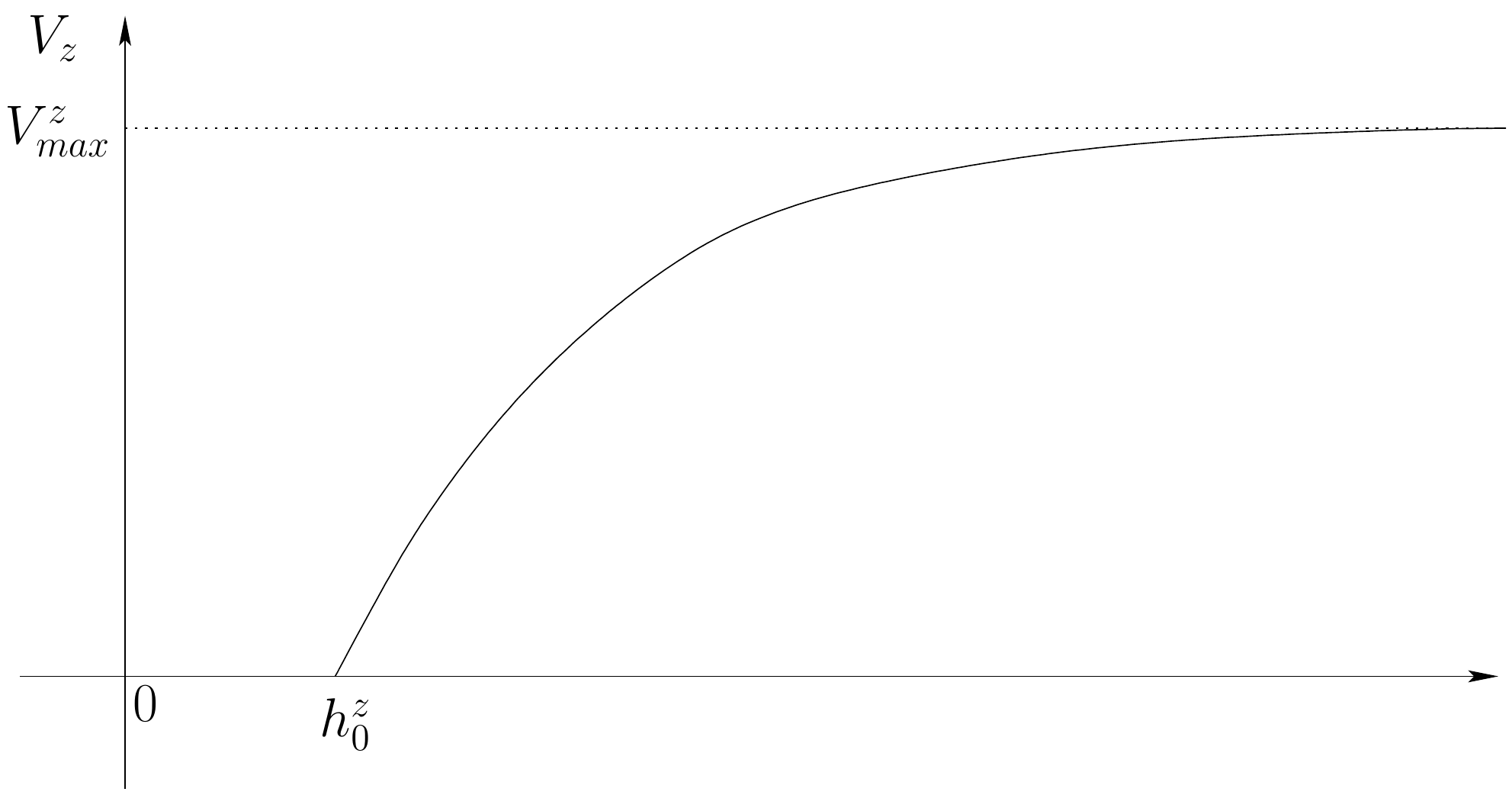}
  \caption{Schematic representation of the optimal velocity functions.}
  \label{OVF}
\end{figure}

With these conditions in mind, we consider the (random) follow-the-leader model :
\begin{equation}\label{eq:model}
\frac{d}{dt} U_i(t)= V_{Z_i} (U_{i+1}(t)-U_i(t)), \; t\geq 0,  \forall i\geq 0. 
\end{equation}
In this model, $U_i$ denotes the position of car $i$ and $\frac{d}{dt} U_i$ its velocity. 

\subsection{Main results}
We now want to rescale the microscopic model. For $\epsilon>0$, we consider an initial condition $(U^{ \ep,0}_i)$ such that there exists a Lipschitz continuous function $u_0:\R\to \R$ with
\begin{equation}\label{eq:initcond}
\lim_{\ep\to 0, \ \ep i \to x} \ep U^{ \ep,0}_i = u_0(x), 
\end{equation}
locally uniformly with respect to $x$. Let $(U^\ep_i)$ be the solution of \eqref{eq:model} with initial condition $(U^{\ep,0}_i)$. 

\begin{Theorem}[Convergence result]\label{th:CV}
Under assumptions $(H1)-(H5)$,  the limit 
$$
u(x,t):= \lim_{\ep \to 0,\  \ep(i,s)\to (x,t)} \ep U^\epsilon_i(s)
$$
exists a.s., locally uniformly in $(x,t)$, and  $u$ is the unique (deterministic) viscosity solution of 
\begin{equation}\label{eq:macro}\left\{\begin{array}{ll}
\partial_t u=\bar F(\partial_x u) & {\rm in}\; \R\times ]0,+\infty[\\
u(x,0)=u_0(x)& {\rm in}\; \R
\end{array}
\right.
\end{equation}
where the effective velocity $\bar F:[0,+\infty)\to [0,\bar V_{\max})$ is the continuous and increasing map defined by $\bar F(p)=0$ if $p\le \bar h_0$ and $\E[V^{-1}_{Z_0}(\bar F(p))]=p$ if $p>\bar h_0$. 
\end{Theorem}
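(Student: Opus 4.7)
The plan is to use the perturbed test function method of Evans for homogenisation of Hamilton-Jacobi equations, combined with sublinear random correctors built from the strong law of large numbers applied to the i.i.d.\ types $(Z_i)$. I proceed in three stages: a priori compactness, construction of planar solutions, and verification of the viscosity sub/supersolution inequalities on the half-relaxed limits, before invoking uniqueness for \eqref{eq:macro}. The first step is to extend $\ep\,U^\ep_{\lfloor x/\ep\rfloor}(t/\ep)$ to a continuous function $u^\ep(x,t)$ by linear interpolation in $x$. Two uniform Lipschitz bounds are straightforward: in $t$, with constant $V_{\max}$, directly from $0\le\dot U^\ep_i\le V_{\max}$; in $x$, using that the follow-the-leader system preserves the order of initial spacings (comparison in the ODE system, which holds because every $V_z$ is non-decreasing), so that the Lipschitz constant of $u_0$ propagates in time. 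Arzel\`a--Ascoli then delivers locally uniform convergence along subsequences.

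For each slope $p>\bar h_0$, I introduce the planar solution
\[
\tilde U^{p,\omega}_i(t) \;:=\; \sum_{j=0}^{i-1} V^{-1}_{Z_j(\omega)}(\bar F(p)) \,+\, \bar F(p)\,t,
\]
which solves \eqref{eq:model} by construction: the gap $\tilde U^{p,\omega}_{i+1}-\tilde U^{p,\omega}_i = V^{-1}_{Z_i}(\bar F(p))$ makes $V_{Z_i}$ of the gap equal to $\bar F(p)=\tfrac{d}{dt}\tilde U^{p,\omega}_i$. The defining identity $\E[V^{-1}_{Z_0}(\bar F(p))]=p$ combined with the strong law of large numbers gives, on a full-probability event (a single one for all $p$ in a countable dense subset of $(\bar h_0,+\infty)$) and locally uniformly in $(x,t)$,
\[
\ep\,\tilde U^{p,\omega}_{\lfloor x/\ep\rfloor}(t/\ep) \;\longrightarrow\; p\,x + \bar F(p)\,t.
\]
Equivalently, the random corrector $\chi^{p,\omega}(i):=\tilde U^{p,\omega}_i(0) - p\,i$ is sublinear, i.e.\ $\chi^{p,\omega}(i)/i\to 0$ a.s.

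I then show that $u^*:=\limsup^{\!*} u^\ep$ is a viscosity subsolution and $u_*:=\liminf_{\!*} u^\ep$ a supersolution of \eqref{eq:macro}. Given a smooth $\phi$ with $u^*-\phi$ attaining a strict local max at $(x_0,t_0)$, set $p=\phi_x(x_0,t_0)$, $q=\phi_t(x_0,t_0)$, and, assuming first $p>\bar h_0$, form the perturbed test function $\phi^\ep(x,t):=\phi(x,t)+\ep\,\chi^{p,\omega}(\lfloor x/\ep\rfloor)$. Sublinearity of $\chi^{p,\omega}$ makes the perturbation vanish locally uniformly, so $u^\ep-\phi^\ep$ has a local max at some $(x^\ep,t^\ep)\to(x_0,t_0)$. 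At that max, $\partial_t u^\ep=\partial_t\phi^\ep$ and the finite $x$-difference of $u^\ep$ is dominated by that of $\phi^\ep$; monotonicity of $V_{Z_i}$ together with \eqref{eq:model} yields
\[
\partial_t \phi^\ep(x^\ep,t^\ep) \;\le\; V_{Z_{\lfloor x^\ep/\ep\rfloor}}\!\Bigl(V^{-1}_{Z_{\lfloor x^\ep/\ep\rfloor}}(\bar F(p)) + o(1)\Bigr) \;=\; \bar F(p) + o(1),
\]
hence $q\le\bar F(p)$; the supersolution case is symmetric. Uniqueness of the Lipschitz viscosity solution of \eqref{eq:macro} with continuous non-decreasing $\bar F$ and Lipschitz data $u_0$ then forces $u^*=u_*=u$, upgrading the subsequential convergence to convergence of the whole family and showing that the limit is deterministic.

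The hard part will be the degenerate regime $p\le\bar h_0$: there the natural corrector $\sum_j(h_0^{Z_j}-p)$ has linear growth $(\bar h_0-p)i$ on average, and is therefore not sublinear. The supersolution inequality $q\ge 0=\bar F(p)$ is for free, since $u^\ep$ is non-decreasing in $t$. The subsolution inequality is obtained by approximation from above: with $p_n\downarrow\bar h_0$ the non-degenerate argument yields an inequality involving $\bar F(p_n)\to 0$; a doubling-of-variables localisation keeps the max near $(x_0,t_0)$, and one concludes by continuity of $\bar F$ at $\bar h_0$. Assumption (H5) is used implicitly: without it, the map $\theta\mapsto\E[V^{-1}_{Z_0}(\theta)]$ would saturate at a finite value as $\theta\to\underline{V_{\max}}^-$, and $\bar F(p)$ would fail to be defined for large slopes $p$.
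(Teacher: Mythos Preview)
Your approach is essentially correct and takes a genuinely different route from the paper. You implement the Evans perturbed--test--function method directly, adding the random corrector $\ep\chi^{p,\omega}(\lfloor x/\ep\rfloor)$ to the smooth test function and reading off the subsolution inequality pointwise from the microscopic ODE: the one--step gap at the discrete maximum is $\le V^{-1}_{Z_{i_\ep}}(\bar F(p))+o(1)$, whence $\partial_t\phi\le V_{Z_{i_\ep}}(V^{-1}_{Z_{i_\ep}}(\bar F(p))+o(1))=\bar F(p)+o(1)$ by the uniform Lipschitz bound (H1). The paper, by contrast, proceeds in two layers: it first solves the cell problem (Proposition~\ref{prop.limlin}, showing $U_i(t)/t\to\bar F(p)$ for affine data), and then, in the convergence proof, it compares $U^\ep$ near the contact point with a solution started from affine data, using a finite--window localisation lemma (Lemma~\ref{lem.local}) to control the error from truncating the comparison, and a carefully tuned intermediate scale $\delta_\ep=-\gamma\ep\ln\ep$. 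Both arguments rest on the same correctors $c_i^\theta=\sum_{j<i}V_{Z_j}^{-1}(\theta)$ and the SLLN, but your route avoids the localisation lemma altogether, while the paper's route isolates the ergodic limit as a standalone statement that can be quoted independently.

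One genuine slip: your a~priori Lipschitz bound in $x$ is not justified. The claim that ``the follow-the-leader system preserves the order of initial spacings'' fails in the heterogeneous setting: if car $i$ has a slow type and cars $i+1,\dots,i+N$ have fast types, the gap $U_{i+1}-U_i$ grows linearly over microscopic times of order $N$, and after rescaling this produces unbounded spatial slopes. (The comparison principle compares two \emph{solutions}, not spacings within one.) This does not break your proof, because the half--relaxed limits $u^*,u_*$ that you use later require only local boundedness, which follows from $0\le\dot U^\ep_i\le V_{\max}$ and the convergence of the initial data; the Arzel\`a--Ascoli step is superfluous and should be dropped. Apart from this, and the need to phrase the maximum at the discrete level $(i,s)$ rather than after interpolation (so that the piecewise--constant perturbation $\ep\chi(\lfloor x/\ep\rfloor)$ causes no trouble), the argument is sound.
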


\subsection{Link with the LWR model}
In this subsection, we explicit the link between the macroscopic model \eqref{eq:macro} and the classical Lighthill-Whitham-Richards (LWR) model (see \cite{LW,richards}).
Let $U^\ep$ be the solution of \eqref{eq:model} with an initial condition $U^{ \ep,0}$ such that \eqref{eq:initcond} holds. We consider the (rescaled) empirical density of cars: 
$$
\rho^\ep(t) = \ep  \sum_{i\in \Z} \delta_{\ep U^\ep_{i} (t/\ep)}, \qquad t\geq 0. 
$$

\begin{Corollary} As $\ep\to 0$, $\rho^\ep(t)$ converges, a.s., in distribution and locally uniformly in time, to 
$$
\rho(t):= u(\cdot, t)\sharp dx,
$$
 where $u$ is the solution of \eqref{eq:macro} and the notation $u(\cdot, t)\sharp dx$ denotes the push-forward measure of $dx$ by $u$. If, in addition, there exists $C>0$ such that
\begin{equation}\label{hyp:add}
C^{-1} \leq \partial_x u_0(x) \leq C,
\end{equation}
then $\rho$ has an absolutely continuous density which is locally bounded and is the entropy solution of the LWR model 
\begin{equation}\label{eq:LWR}
\partial_t \rho+ \partial_x(\rho \bar v(\rho))= 0\qquad {\rm in}\; \R\times \R_+,
\end{equation}
with initial condition $u_0(\cdot)\sharp dx$ and where $\bar v(\rho)= \bar F(1/\rho)$. 
\end{Corollary}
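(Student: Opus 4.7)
For $\varphi\in C_c(\R)$, one has $\langle \rho^\ep(t),\varphi\rangle = \ep\sum_{i\in\Z}\varphi\bigl(\ep U^\ep_i(t/\ep)\bigr)$, a Riemann sum of mesh $\ep$ in the variable $y=\ep i$. Theorem \ref{th:CV} yields, a.s.\ and locally uniformly, $\ep U^\ep_i(t/\ep)\to u(y,t)$ as $\ep i\to y$. Since $i\mapsto U^\ep_i$ is nondecreasing (cars do not cross), the limit $u(\cdot,t)$ is nondecreasing, so only those $y$ whose image lies in a neighbourhood of $\mathrm{supp}(\varphi)$ contribute; this set stays in a uniformly bounded interval when $t$ ranges over a compact set. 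Passing to the Riemann-sum limit,
\[
\langle \rho^\ep(t),\varphi\rangle \longrightarrow \int_\R\varphi(u(y,t))\,dy = \langle u(\cdot,t)\sharp dx,\varphi\rangle,
\]
locally uniformly in $t$, which is the first assertion.

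\smallskip\noindent\textbf{Step 2 (Structure of $u$ under \eqref{hyp:add}).} By translation invariance of \eqref{eq:macro} and the comparison principle for Hamilton-Jacobi equations, the bounds $C^{-1}\le u_0'\le C$ propagate: $C^{-1}h\le u(x+h,t)-u(x,t)\le Ch$ for all $h>0$ and all $t\ge 0$. Hence $u(\cdot,t)$ is bi-Lipschitz with $C^{-1}\le\partial_x u\le C$ a.e., and the change-of-variables formula shows that $\rho(t)$ is absolutely continuous with density $\rho(x,t)=1/\partial_y u(u(\cdot,t)^{-1}(x),t)\in[C^{-1},C]$.

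\smallskip\noindent\textbf{Step 3 (Entropy solution of LWR).} Set $N(x,t):=u(\cdot,t)^{-1}(x)$, the Eulerian ``cumulative car count''. Since $u$ is the viscosity solution of $\partial_t u=\bar F(\partial_x u)$ and is bi-Lipschitz in $x$ by Step 2, the inversion $y\leftrightarrow x$ formally turns this into
\[
\partial_t N + F(\partial_x N)=0,\qquad F(q):=q\,\bar F(1/q),
\]
with $N(\cdot,0)=u_0^{-1}$. The classical viscosity/entropy correspondence (differentiation in $x$) then identifies $\rho=\partial_x N$ as the Kruzhkov entropy solution of $\partial_t\rho+\partial_x F(\rho)=0$, which coincides with \eqref{eq:LWR} since $F(\rho)=\rho\bar v(\rho)$; the initial datum $\rho(\cdot,0)=u_0\sharp dx$ is consistent with Step 1 at $t=0$. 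The main obstacle is precisely this last step: justifying the Lipschitz inversion at the level of viscosity (resp.\ entropy) solutions with only the regularity provided by Step 2. A clean workaround is to approximate $u_0$ by smooth initial data sharing the same derivative bounds, in which case characteristics of $\partial_t u=\bar F(\partial_x u)$ are classical and the push-forward gives a classical solution of \eqref{eq:LWR} on the approximate problem; one then passes to the limit using the $L^1$-contraction of entropy solutions of \eqref{eq:LWR}.
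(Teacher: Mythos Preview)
Your proof follows essentially the same route as the paper's. In Step~1 the paper rewrites the sum as $\int_\R \varphi\bigl(\ep U^\ep_{[x/\ep]}(t'/\ep)\bigr)\,dx$ rather than invoking Riemann sums in words, but the mechanism is identical; Steps~2 and~3 coincide with the paper's argument, which simply asserts that the space-inverse $w=u^{-1}$ is a viscosity solution of $\partial_t w+\partial_x w\,\bar F(1/\partial_x w)=0$ and that $\rho=\partial_x w$ is then an entropy solution of \eqref{eq:LWR}, without spelling out the smoothing workaround you propose (so your Step~3 is, if anything, more scrupulous than the paper's ``one easily checks'').
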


\begin{proof} Let $\varphi\in {\mathcal C}^0_c(\R)$. Then, for any $t'\geq 0$,  
\begin{align*}
\int_{\R} \varphi(x) \rho^\ep(dx,t') & = \ep \sum_{i\in \Z} \varphi(\ep U^\ep_{i} (t'/\ep))  = \int_{\R} \varphi(\ep U^\ep_{[x/\ep]} (t'/\ep))  dx. 
\end{align*}
As $\ep([x/\ep], t'/\ep)\to (x, t)$ as $\ep\to0$ and $t'\to t$, Theorem \ref{th:CV} implies: 
\begin{align*}
\lim_{\ep \to 0, \; t'\to t} \int_{\R} \varphi(x) \rho^\ep(dx,t) & =  \int_{\R} \varphi(u(x,t))  dx = \int_\R \varphi(x) d(u(\cdot,t)\sharp dx)=\int_\R\varphi(x)\rho(dx,t). 
\end{align*}
This proves that $\rho^\ep(t)$ converges locally uniformly in time and in the sense of measures to $\rho(t):= u(\cdot,t)\sharp dx$. 

Let us now assume that  \eqref{hyp:add} holds. From standard arguments (as $\bar F$ does not depend on space), one easily checks that the bounds \eqref{hyp:add} are preserved in time: 
$$
C^{-1} \leq \partial_x u(x,t) \leq C.
$$
In particular $\rho(t)$ is absolutely continuous with a density satisfying the same estimate: 
$$
C^{-1} \leq \rho(x,t) \leq C.
$$
In addition, one easily checks that  $w(x,t):= u^{-1}(x,t)$ (where the inverse is taken in space) is a viscosity solution to 
$$
\partial_t w +\partial_x w \bar F(1/\partial_xw)=0,
$$
which in turn implies that $\rho(x,t)= \partial_x w(x,t)$ is an entropy solution to \eqref{eq:LWR}. 
\end{proof}

\section{Some properties of the microscopic and macroscopic models}
\subsection{The microscopic model}
The following lemma is a direct consequence of the fact that $V_z$ is non-negative joint to assumption $(H_4)$. 
\begin{Lemma}
Let $U_i$ be a solution of \eqref{eq:model}. Then, for all $t\ge 0$,
$$0\le U_i(t)-U_i(0)\le V_{\rm max}t.$$
\end{Lemma}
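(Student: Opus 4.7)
The proof is a direct integration of the ODE in \eqref{eq:model}, so the plan is essentially to spell out the two sign/bound facts about $V_z$ and then integrate.

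First I would rewrite the defining ODE in integral form,
\[
U_i(t)-U_i(0)=\int_0^t V_{Z_i}\bigl(U_{i+1}(s)-U_i(s)\bigr)\, ds,
\]
which is valid once we know a solution exists; existence of a (unique) $C^1$ solution follows from $(H_1)$, since the right-hand side depends on $U_i,U_{i+1}$ in a way that is uniformly Lipschitz in $p=U_{i+1}-U_i$. (I would note this but not belabour the point, since the system is infinite and one needs the usual argument that $U_{i+1}-U_i$ stays in the domain of $V_{Z_i}$; here it suffices that $V_{Z_i}$ is defined on all of $\R_+$ and that, thanks to the lower bound proved next, $U_{i+1}-U_i$ remains non-negative, so the right-hand side makes sense.)

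Next I would establish the two pointwise bounds on the integrand. For the lower bound, assumptions $(H_2)$ and $(H_3)$ force $V_z(p)\ge 0$ for every $z\in{\mathcal Z}$ and every $p\in\R_+$: indeed $V_z\equiv 0$ on $[0,h_0^z]$, and $V_z$ is increasing on $[h_0^z,+\infty)$ starting from the value $0$, so $V_z\ge 0$ throughout. For the upper bound, $(H_4)$ gives $V_z(p)\le V_{\max}^z\le V_{\max}$ for all $p\in\R_+$ (using monotonicity on $[h_0^z,\infty)$ and the fact that $V_z\le V_{\max}^z$ holds in the limit as $p\to\infty$; since $V_z$ is increasing and tends to $V_{\max}^z$, it is bounded above by this limit everywhere).

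Plugging the two bounds $0\le V_{Z_i}(U_{i+1}(s)-U_i(s))\le V_{\max}$ into the integral representation yields
\[
0\le U_i(t)-U_i(0)\le V_{\max}\, t,
\]
which is exactly the claim. The only subtlety worth commenting on, and the closest thing to an obstacle, is the appeal to existence of a global solution for this infinite system; but this is standard for follow-the-leader dynamics with Lipschitz, bounded velocity functions, and the lower bound $U_i(t)-U_i(0)\ge 0$ combined with the same estimate for $U_{i+1}$ shows that the spacings $U_{i+1}-U_i$ remain in $\R_+$, so the ODE never leaves the domain where the assumptions $(H_1)$--$(H_4)$ apply.
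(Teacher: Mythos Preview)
Your proof is correct and matches the paper's approach exactly: the paper does not spell out a proof but simply remarks that the lemma ``is a direct consequence of the fact that $V_z$ is non-negative joint to assumption $(H_4)$,'' which is precisely the nonnegativity/boundedness argument you integrate. Your additional comments on well-posedness of the infinite system are not needed for the lemma as stated (a solution is assumed to exist), but they are harmless.
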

\begin{Proposition}\label{prop.compar}
Let $U_i$ and $\tilde U_i$ be two solutions of \eqref{eq:model} such that there exists $i_0\ge 0$ such that 
$$U_i(0)\le \tilde U_i(0)\quad \forall i\ge i_0.$$
Then
$$U_i(t)\le \tilde U_i(t)\quad \forall t\ge 0 \textrm{ and }  i\ge i_0.$$
\end{Proposition}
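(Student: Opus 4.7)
The plan is a standard comparison argument adapted to the semi-infinite follow-the-leader chain: add to $\tilde U_i$ a perturbation which is strictly positive initially and grows linearly in $i$, and then reach a contradiction at the index-time where the perturbed difference attains its infimum on $[i_0,\infty)\times[0,T]$, using the monotonicity and Lipschitz continuity of $V_z$.

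Fix $T>0$ and let $L$ denote the uniform Lipschitz constant of $V_z$ from $(H_1)$. For parameters $\epsilon,\eta,K>0$ to be tuned below, set
$$W_i^{\epsilon,\eta}(t):=\tilde U_i(t)-U_i(t)+\epsilon e^{Kt}+\eta(i-i_0),\qquad i\ge i_0,\ t\in[0,T].$$
Then $W_i^{\epsilon,\eta}(0)\ge\epsilon>0$. Since $\dot U_i,\dot{\tilde U_i}\in[0,V_{\max}]$, the difference $\tilde U_i-U_i$ is $V_{\max}$-Lipschitz in $t$, giving $W_i^{\epsilon,\eta}(t)\ge-V_{\max}T+\epsilon+\eta(i-i_0)$. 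Hence $W_i^{\epsilon,\eta}(t)\to+\infty$ as $i\to+\infty$ uniformly in $t\in[0,T]$ and, by continuity in $t$, the infimum of $W^{\epsilon,\eta}$ on $[i_0,\infty)\times[0,T]$ is attained at some finite $(i^*,t^*)$. Suppose this infimum is $\le 0$. Then $t^*>0$ and $\dot W_{i^*}^{\epsilon,\eta}(t^*)\le 0$ (understood one-sided if $t^*=T$). Minimality in $i$ forces $W_{i^*+1}^{\epsilon,\eta}(t^*)\ge W_{i^*}^{\epsilon,\eta}(t^*)$, which rewrites as $(\tilde U_{i^*+1}-\tilde U_{i^*})(t^*)\ge(U_{i^*+1}-U_{i^*})(t^*)-\eta$. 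Combining the monotonicity of $V_{Z_{i^*}}$ (from $(H_2)$--$(H_3)$) with its $L$-Lipschitz continuity yields $V_{Z_{i^*}}(\tilde U_{i^*+1}-\tilde U_{i^*})(t^*)-V_{Z_{i^*}}(U_{i^*+1}-U_{i^*})(t^*)\ge-L\eta$, so that
$$\dot W_{i^*}^{\epsilon,\eta}(t^*)\ge-L\eta+\epsilon K e^{Kt^*}\ge-L\eta+\epsilon K.$$
Choosing, say, $\eta=\epsilon/(2L)$ and $K=1$ gives $\dot W_{i^*}^{\epsilon,\eta}(t^*)\ge\epsilon/2>0$, contradicting $\dot W_{i^*}^{\epsilon,\eta}(t^*)\le 0$.

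Consequently $W_i^{\epsilon,\eta}>0$ on $[i_0,\infty)\times[0,T]$, and for fixed $i$ and $t$ letting $\epsilon\to 0$ (so that $\eta\to 0$) yields $U_i(t)\le\tilde U_i(t)$; since $T>0$ is arbitrary, the comparison holds for all $t\ge 0$. I expect the main technical obstacle to be precisely the non-compactness in the index $i\in[i_0,\infty)$: without the $\eta(i-i_0)$ term the infimum of a purely $\epsilon$-perturbed difference could a priori be approached only as $i\to+\infty$, blocking any pointwise contradiction. Introducing the linear-in-$i$ drift, tuned small enough to be absorbed by the exponential time drift $\epsilon Ke^{Kt}$ via the Lipschitz estimate, is what localizes the infimum and lets the monotonicity of $V_z$ deliver the sign of the time derivative.
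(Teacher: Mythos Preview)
Your argument is correct. You localize the infimum with a linear-in-$i$ drift $\eta(i-i_0)$ and an exponential time drift $\epsilon e^{Kt}$, then use the monotonicity and uniform Lipschitz constant $L$ of $V_z$ to force a strictly positive time derivative at the minimum; the choice $\eta=\epsilon/(2L)$, $K=1$ closes the contradiction, and letting $\epsilon\to 0$ for fixed $(i,t)$ gives the comparison.

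The paper proceeds differently: it works with the supremum of $U_i(t)-\tilde U_i(s)$ penalized by a doubled-time term $\frac{(t-s)^2}{2\varepsilon}$, a barrier $\frac{\eta}{T-t}$ in $t$, and a quadratic penalty $\beta(i-i_0)^2$ in the index. After ruling out $t=0$ and $s=0$, the separate optimality in $t$ and $s$ yields
\[
\frac{\eta}{T^2}\le V_{Z_i}(U_{i+1}(t)-U_i(t))-V_{Z_i}(\tilde U_{i+1}(s)-\tilde U_i(s))\le C\beta(1+2(i-i_0)),
\]
and sending $\beta\to 0$ (with $\beta(i-i_0)\to 0$) produces the contradiction. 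This is the standard viscosity-solution comparison template, which is robust in that it would carry over to settings where one or both families are only semicontinuous sub/supersolutions. Your route is more elementary and direct precisely because the $U_i$ and $\tilde U_i$ are genuine $C^1$ solutions of the ODE, so no variable doubling is needed; the single-time perturbation suffices. The trade-off is that your argument, as written, relies on classical differentiability at the minimum point, whereas the paper's doubling device does not.
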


\begin{proof}
Let $T\ge 0$. We want to prove that 
$$M=\sup_{t\in[0,T]}\max_{i\ge i_0}(U_i(t)-\tilde U_i(t))\le 0.$$
By contradiction, assume that $M>0$ and consider for $\varepsilon, \eta,\beta>0$
$$M_{\varepsilon}=\sup_{t,s\in[0,T]}\max_{i\ge i_0}\left\{U_i(t)-\tilde U_i(s)-\frac{(t-s)^2}{2\varepsilon}-\frac \eta{T-t}-\beta (i-i_0)^2\right\}>0$$
for $\eta$ and $\beta$ small enough. We assume that the maximum is reached at some point $(t,s)\in[0,T]$ and for an index $i\ge i_0$. We first prove that $t>0$ and $s>0$. Indeed, if $t=0$, then we get
$$\frac {s^2}{2\varepsilon}+\frac {\eta}T<U_i(0)-\tilde U_i(s)\le U_i(0)-\tilde U_i(0)\le0$$
which is a contradiction.
In the same way, if $s=0$, then
$$\frac{t^2}{2\varepsilon}+\frac \eta {T}\le U_i(t)-\tilde U_i(0)\le V_{\rm max}t$$
which is a contradiction for $\varepsilon$ small enough.
We then deduce that $t,s>0$. This implies that
$$\frac \eta{T^2}\le V_{Z_i}(U_{i+1}(t)-U_i(t))-V_{Z_i}(\tilde U_{i+1}(s)-\tilde U_i(s)).$$
Using that
$$U_{i+1}(t)-\tilde U_{i+1}(s)-\beta(i+1-i_0)^2\le U_{i}(t)-\tilde U_{i}(s)-\beta(i-i_0)^2,$$
and the fact that $V_Z$ are uniformly Lipschitz continuous, we deduce that
$$\frac \eta{T^2}\le C\beta(1+2(i-i_0)).$$
Sending $\beta\to 0$ (and using the classical fact that $\beta(i-i_0)\to0$), we get a contradiction.
\end{proof}
\subsection{The macroscopic model}
In this subsection, we recall the definition of viscosity solution for the macroscopic model \eqref{eq:macro} and give a comparison principle.

\begin{Definition}[Definition of viscosity solutions for \eqref{eq:macro}]
\label{def:macro}
An upper semi-continuous (resp. lower semi-continuous) function $u: \mathbb R \times [0,+\infty) \rightarrow \mathbb{R}$ is a viscosity sub-solution (resp. super-solution) of \eqref{eq:macro} if $u(x,0)\leq {u}_0(x)$ (resp. $u(x,0)\geq {u}_0(x)$) and for all $(x,t)\in  \mathbb R \times [0,+\infty)$ and for all $\varphi\in C^1 ( \mathbb R \times [0,+\infty)$ such that $u-\varphi$ reaches a local maximum (resp. minimum) in $(x,t)$, we have
\begin{equation}
\partial _t\varphi(x,t)\le \bar F(\partial _x\varphi(x,t))\quad \left({\rm resp.}\; \partial _t\varphi(x,t)\ge \bar F(\partial _x\varphi(x,t))\right)
\end{equation}
We say that $u$ is a viscosity solution of \eqref{eq:macro} if $u^*$ and $u_*$ are respectively a sub-solution and a super-solution of \eqref{eq:macro}. 
\end{Definition}

\begin{Proposition}[Comparison principle for \eqref{eq:macro}]\label{pro:PCmacro}
Let $u$ and $v$ be respectively a sub and a super-solution of \eqref{eq:macro} and assume that $u_0$ is Lipschitz continuous. Then
$$u\le v\quad{ in}\; \mathbb R \times [0,+\infty).$$
\end{Proposition}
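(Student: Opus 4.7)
The plan is to use the classical doubling-of-variables method of Crandall--Ishii--Lions, adapted to the unbounded spatial domain. Fix a time horizon $T>0$, argue by contradiction by supposing $M := \sup_{\R \times [0,T]}(u^* - v_*) > 0$, and introduce, for $\epsilon, \eta, \alpha > 0$, the auxiliary function
\[
\Psi(x,y,t,s) := u(x,t) - v(y,s) - \frac{(x-y)^2}{2\epsilon} - \frac{(t-s)^2}{2\epsilon} - \frac{\eta}{T-t} - \alpha(1+x^2+y^2).
\]
The roles of the three penalizations are standard: $(x-y)^2/\epsilon$ and $(t-s)^2/\epsilon$ double the variables, $\eta/(T-t)$ prevents the supremum from escaping at $t=T$, and $\alpha(1+x^2+y^2)$ compensates for the linear growth of $u,v$ in $x$ (inherited from the Lipschitz assumption on $u_0$ and the bound $\bar F \le \bar V_{\max}$), so as to force the supremum to be attained at some point $(\hat x,\hat y,\hat t,\hat s)$.

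Following the same lines as in the proof of Proposition~\ref{prop.compar}, for $\eta,\alpha$ sufficiently small one checks that $\Psi(\hat x,\hat y,\hat t,\hat s) > 0$, that $\hat t,\hat s > 0$ (using the initial-trace inequalities $u(\cdot,0)\le u_0\le v(\cdot,0)$), and that both $(\hat x - \hat y)/\epsilon$ and $(\hat t-\hat s)/\epsilon$ remain bounded independently of $\alpha$. Writing then the sub-solution inequality for $u$ at $(\hat x,\hat t)$ and the super-solution inequality for $v$ at $(\hat y,\hat s)$, with the natural test functions induced by $\Psi$, yields
\[
\frac{\hat t - \hat s}{\epsilon} + \frac{\eta}{(T-\hat t)^2} \le \bar F\!\left(\frac{\hat x - \hat y}{\epsilon} + 2\alpha \hat x\right), \qquad \frac{\hat t - \hat s}{\epsilon} \ge \bar F\!\left(\frac{\hat x - \hat y}{\epsilon} - 2\alpha \hat y\right).
\]
Subtracting these two inequalities and letting $\alpha \to 0$ (at fixed $\epsilon,\eta$), the continuity of $\bar F$ gives $\eta/T^2 \le 0$, the desired contradiction.

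The hard part will be guaranteeing that $\alpha \hat x,\alpha \hat y \to 0$ as $\alpha\to 0$, since without this one cannot pass to the limit in the two viscosity inequalities above. This requires an $\alpha$-uniform control of $\alpha(\hat x^2+\hat y^2)$, which in turn relies on an a priori linear-growth estimate for $u$ and $v$ in the space variable obtained from the Lipschitz character of $u_0$ and the bound $\bar F\le \bar V_{\max}$ via standard barrier arguments (the affine functions $(x,t)\mapsto u_0(x_0)\pm L|x-x_0|\pm \bar V_{\max}t$ being classical sub- and super-solutions of \eqref{eq:macro}). Once this growth control is secured, comparing $\Psi(\hat x,\hat y,\hat t,\hat s)$ to $\Psi$ evaluated at a fixed reference point yields the needed bound, and the remaining manipulations are routine for first-order Hamilton--Jacobi equations of this space-independent form.
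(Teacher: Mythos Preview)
The paper does not actually supply a proof of this proposition: it is stated without argument and treated as a standard result from first-order viscosity solution theory. Your doubling-of-variables outline is exactly the classical route one would take, so in that sense there is nothing to compare.

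There is, however, one genuine gap in your sketch. You propose to obtain the linear spatial growth of $u$ and $v$ ``via standard barrier arguments'', using that the affine functions $(x,t)\mapsto u_0(x_0)\pm L|x-x_0|\pm \bar V_{\max}t$ are classical sub- and super-solutions. But to pass from ``these affine functions are super-solutions'' to ``they dominate the sub-solution $u$'' (and dually for $v$) you would need precisely the comparison principle you are proving --- the argument is circular. In the standard treatment of comparison on unbounded domains, a growth hypothesis on $u$ and $v$ (sublinear, or at most linear, growth at infinity) is part of the \emph{statement}, not something derived inside the proof. For the application in this paper this is harmless: the half-relaxed limits $u^*$ and $u_*$ built in the proof of Theorem~\ref{th:CV} inherit linear growth directly from the microscopic bounds $0\le U_i(t)-U_i(0)\le V_{\max}t$ and the convergence of the initial data, so the needed control is available a priori. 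But as written, your argument does not close without either adding such a growth assumption to the proposition or giving a comparison-free justification of the bound (e.g.\ a finite-speed-of-propagation/domain-of-dependence argument proved independently).

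A minor additional point: $\bar F$ is defined only on $[0,+\infty)$, so to make the viscosity inequalities at $(\hat x,\hat t)$ and $(\hat y,\hat s)$ meaningful you should either extend $\bar F$ continuously to $\R$ (say $\bar F(p)=0$ for $p<0$) or restrict to sub-/super-solutions that are nondecreasing in $x$, which is again automatic for the $u^*,u_*$ actually used.
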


\section{Construction of the effective velocity}
Recall that $\bar h_0:=\E[h_0^{Z_0}]$. Given $p>\bar h_0$, in order to construct the effective velocity $\bar F(p)$,  we consider the solution to 
\begin{equation}\label{eq:lineardata}
\frac{d}{dt} U_i(t)= V_{Z_i} (U_{i+1}(t)-U_i(t)), \; t\geq 0, \qquad U_i(0)= pi \qquad \forall i\geq 0. 
\end{equation}
We begin to prove that, as $t\to \infty$, $U_i(t)/t$ is bounded by $\underline 
{V_{\max}}:=  \inf_{z\in {\mathcal Z}} V_{\max}^z$.
\begin{Proposition} Let $(U_i)$ be the solution of \eqref{eq:lineardata}. Then, for every $i\ge 0$, we have,
$$ \limsup_{t\to\infty} \;\frac {U_i(t)}{t}\le  \inf_{z\in {\mathcal Z}} V_{\max}^z=:\underline 
{V_{\max}}.
$$
\end{Proposition}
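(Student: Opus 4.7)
The plan is to bound $U_i$ from above by the trajectory of a sufficiently slow car located somewhere ahead of $i$; the existence of such a slow car, almost surely, will be secured by the full-support assumption on the law of $Z_0$.

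As a preliminary step, I would check that $U_i(t)\le U_j(t)$ is preserved for $i\le j$ and all $t\ge 0$. Writing $W_i:=U_{i+1}-U_i$, so that $W_i(0)=p>0$, and using that $V_z(0)=0$ by $(H_2)$ together with the uniform Lipschitz bound $L$ from $(H_1)$, one has $V_z(q)\le Lq$ for $q\ge 0$, whence $\dot W_i(t)=V_{Z_{i+1}}(W_{i+1}(t))-V_{Z_i}(W_i(t))\ge -L W_i(t)$. A Gronwall-type estimate then gives $W_i(t)\ge p\,e^{-Lt}>0$, and the ordering $U_i(t)\le U_j(t)$ for $j\ge i$ follows by induction.

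Fix now $\epsilon>0$ and set $A_\epsilon:=\{z\in\mathcal Z:\ V_{\max}^z<\underline{V_{\max}}+\epsilon\}$. Passing $p\to+\infty$ in the uniform continuity of $(z,p)\mapsto V_z(p)$ given by $(H_1)$ shows that $z\mapsto V_{\max}^z$ is continuous on $\mathcal Z$, so $A_\epsilon$ is open; it is non-empty by the very definition of $\underline{V_{\max}}$. The full-support assumption then forces $\P[Z_0\in A_\epsilon]>0$, and since the $(Z_j)$ are i.i.d., the second Borel--Cantelli lemma gives, almost surely, infinitely many indices $j>i$ with $V_{\max}^{Z_j}<\underline{V_{\max}}+\epsilon$. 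For any such $j$ the crude bound $\dot U_j\le V_{\max}^{Z_j}$ yields $U_j(t)\le pj+(\underline{V_{\max}}+\epsilon)t$, and combining this with the ordering $U_i\le U_j$ I would conclude $\limsup_{t\to\infty}U_i(t)/t\le\underline{V_{\max}}+\epsilon$. Letting $\epsilon$ run through a countable sequence tending to $0$ finishes the argument almost surely.

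None of the steps is technically hard in isolation; the main conceptual point lies in the probabilistic step, where both the continuity of $z\mapsto V_{\max}^z$ (which requires the \emph{uniform} continuity in $(H_1)$, not merely continuity for each fixed $z$) and the full-support hypothesis on $Z_0$ are essential to guarantee that arbitrarily slow cars actually appear arbitrarily far ahead of $i$.
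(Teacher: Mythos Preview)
Your proof is correct and follows essentially the same route as the paper's: locate a car $j\ge i$ whose maximal speed $V_{\max}^{Z_j}$ is within $\epsilon$ of $\underline{V_{\max}}$, bound $U_j(t)\le U_j(0)+V_{\max}^{Z_j}\,t$, and use the monotonicity $U_i\le U_j$ to conclude. You supply more detail than the paper on two points it leaves implicit---the preservation of the ordering $U_i\le U_{i+1}$ (which you prove via Gronwall on the gaps) and the justification that slow cars actually occur ahead of $i$ (where you explicitly invoke continuity of $z\mapsto V_{\max}^z$, openness of $A_\epsilon$, the full-support assumption, and Borel--Cantelli, whereas the paper simply asserts the existence of one such $i_0$).
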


\begin{proof}
Let $\varepsilon>0$ and fix $i\ge 0$. Since the $Z_j$ are i.i.d. and have a law which has a full support in ${\mathcal Z}$, there exists $i_0\ge i$ such that
$$V_{\max}^{Z_{i_0}}\le \underline{V_{\max}} +\varepsilon.$$
Moreover, we have
$$U_{i_0}(t)\le U_{i_0}(0)+V_{\max}^{Z_{i_0}}\cdot t.$$
Since the $U_j$ are increasing in $j$, we deduce that
$$\frac {U_i(t)}{t}\le \frac{U_{i_0}(t)}t\le \frac {U_{i_0}(0)}t+V_{\max}^{Z_{i_0}}.$$
This implies that
$$\limsup_{t\to +\infty} \frac {U_i(t)}t\le V_{\max}^{Z_{i_0}}\le \underline{V_{\max}}+\varepsilon.$$
We finally get the result by taking $\varepsilon\to 0$.
\end{proof}

We now construct the limit of $U_i(t)/t$.
\begin{Proposition}\label{prop.limlin}  There exists $\Omega_0$ with $\mathbb P(\Omega_0)=1$  such that for every $p\ge 0$, $i\in \N$ and $\omega\in \Omega_0$
$$\lim_{t\to+\infty} \frac{ U_i(t)}{t} = \bar F(p)\qquad \forall i\geq 0, 
$$
where   $(U_i)$ is the solution of \eqref{eq:lineardata} and where the continuous and non-decreasing map $\bar F:\R_+\to \R_+$ is defined by $\bar F(p)=0$ if $p\le \bar h_0$ and by the relation $\E[V^{-1}_{Z_0}(\bar F(p))]=p$ if $p>\bar h_0$. 
\end{Proposition}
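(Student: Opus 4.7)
The strategy is to sandwich the solution $(U_i)$ of \eqref{eq:lineardata} by an explicit random family of exact ``traveling wave'' solutions of the FTL system. For each $v\in(0,\underline{V_{\max}})$, the sequence
$$\tilde U^v_i(t):=vt+\sum_{j=0}^{i-1}V^{-1}_{Z_j}(v),\qquad i\ge 0,\ t\ge 0,$$
satisfies \eqref{eq:model} exactly, since $\tilde U^v_{i+1}-\tilde U^v_i=V^{-1}_{Z_i}(v)$ and hence $V_{Z_i}(\tilde U^v_{i+1}-\tilde U^v_i)=v=\frac{d}{dt}\tilde U^v_i$; translation invariance makes $\tilde U^v_i+a$ a solution as well for any $a\in\R$. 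Dominated convergence (justified by (H5)), together with $V^{-1}_{Z_0}(0)=h_0^{Z_0}$, shows that $\theta\mapsto\E[V^{-1}_{Z_0}(\theta)]$ is continuous and strictly increasing from $\bar h_0$ to $+\infty$ on $[0,\underline{V_{\max}})$, so $\bar F$ in the statement is well defined, continuous and non-decreasing on $[0,+\infty)$.

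To produce a single full-probability event $\Omega_0$ valid for every $p$, I would fix a countable set $\mathcal Q\subset(\bar h_0,+\infty)$ dense in $(\bar h_0,+\infty)$ and intersect, over $q\in\mathcal Q$, the SLLN events on which $\frac 1 i\sum_{j=0}^{i-1}V^{-1}_{Z_j}(\bar F(q))\to q$. Fix $\omega\in\Omega_0$, $p\ge 0$, $i\ge 0$. For the upper bound, choose any $q\in\mathcal Q$ with $q>p$ and set $v_q:=\bar F(q)$. On $\Omega_0$ one has $pi-\sum_{j=0}^{i-1}V^{-1}_{Z_j}(v_q)=(p-q)i+o(i)\to-\infty$, so $a_q:=\sup_{i\ge 0}\bigl(pi-\sum_{j=0}^{i-1}V^{-1}_{Z_j}(v_q)\bigr)$ is a.s.\ finite. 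Then $\tilde U^{v_q}_i(0)+a_q\ge pi=U_i(0)$ for every $i\ge 0$, and Proposition \ref{prop.compar} (with $i_0=0$) yields $U_i(t)\le v_q\,t+a_q+\sum_{j=0}^{i-1}V^{-1}_{Z_j}(v_q)$, so $\limsup_{t\to\infty}U_i(t)/t\le v_q=\bar F(q)$. Taking the infimum over $q\in\mathcal Q\cap(p,+\infty)$ and using continuity of $\bar F$ at $p^+$ (with $\bar F(\bar h_0^+)=0$ when $p<\bar h_0$) gives $\limsup_{t\to\infty}U_i(t)/t\le \bar F(p)$.

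For the matching lower bound when $p>\bar h_0$, the symmetric argument with $q\in\mathcal Q\cap(\bar h_0,p)$ and $b_q:=\inf_{i\ge 0}\bigl(pi-\sum_{j=0}^{i-1}V^{-1}_{Z_j}(v_q)\bigr)>-\infty$ (finite because now the sequence diverges to $+\infty$) gives $\tilde U^{v_q}_i(0)+b_q\le U_i(0)$ for all $i\ge 0$, hence by Proposition \ref{prop.compar} $\liminf_{t\to\infty}U_i(t)/t\ge v_q=\bar F(q)$, and letting $q\uparrow p$ along $\mathcal Q$ yields $\bar F(p)$. For $p\le\bar h_0$, the lower bound is trivial since $\dot U_i\ge 0$ implies $U_i(t)/t\ge pi/t\to 0=\bar F(p)$. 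The only genuinely delicate point is measurability: SLLN cannot be applied at an uncountable family of velocities $v$, so the whole argument must be routed through the countable dense $\mathcal Q$ and the continuity of $\bar F$; the a.s.\ finiteness of the random shifts $a_q,b_q$ is the other key technical fact, and it follows precisely from the SLLN drift $p-q$ being of the correct sign whenever $q\ne p$.
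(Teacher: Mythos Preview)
Your argument is correct and shares the paper's core strategy: build exact traveling-wave solutions (the paper calls them correctors, $c_i^\theta+\theta t$ with $c_i^\theta=\sum_{j<i}V_{Z_j}^{-1}(\theta)$), compare with $(U_i)$ via Proposition~\ref{prop.compar}, and invoke the SLLN. The execution differs in one substantive respect. The paper does not shift its comparison family; it only obtains the initial ordering $U_i(0)\le\tilde U_i(0)$ (resp.\ $\ge$) for $i\ge i_0$ with $i_0$ a random integer. For the upper bound this is enough, because $i\mapsto U_i(t)$ is nondecreasing; but for the lower bound the paper must then run an additional Gr\"onwall-type argument on $W_i=U_i-\tilde U_i$ to propagate the inequality from $i_0$ back to $i=0$. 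Your use of translation invariance---adding the finite random constant $a_q$ (resp.\ $b_q$) so that the ordering holds for \emph{all} $i\ge0$---lets Proposition~\ref{prop.compar} apply directly with $i_0=0$ and eliminates that step entirely. You are also more explicit than the paper about producing a single $\Omega_0$ valid for every $p$, via the countable dense set $\mathcal Q$ and the continuity of $\bar F$.

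One minor remark: the continuity of $\theta\mapsto\E[V^{-1}_{Z_0}(\theta)]$ on $[0,\underline{V_{\max}})$ is not really ``justified by (H5)''. What is needed for dominated convergence on compact subintervals is that $\sup_{z\in\mathcal Z}V_z^{-1}(\theta)<\infty$ for each fixed $\theta<\underline{V_{\max}}$, which the paper derives from the compactness of $\mathcal Z$ together with the uniform continuity of $V$; assumption (H5) is used only to guarantee the divergence $\E[V^{-1}_{Z_0}(\theta)]\to+\infty$ as $\theta\to\underline{V_{\max}}^-$.
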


\begin{proof} Let us first check that the map $\bar F$ is well defined and continuous. For this we first recall that  $V_z^{-1}:  [0, V^z_{\max})\to [h_0^z,+\infty)$ is increasing and continuous. We claim that, for any $\theta\in [0, \underline {V_{\max}})$,  $V_z^{-1}(\theta)$ is bounded with respect to $z\in {\mathcal Z}$. Indeed, assume that, contrary to our claim, there exists $z_n\in {\mathcal Z}$ such that $p_n:=V_{z_n}^{-1}(\theta)\to+\infty$. As ${\mathcal Z}$ is compact, there exists $z$ a limit of a subsequence of the $(z_n)$, still denoted $z_n$. Then, by uniform continuity of the map $V$,  
$$
\theta= \lim_n V_{z_n}(p_n) = V^z_{\max} \geq \underline {V_{\max}} >\theta, 
$$
which is a contradiction. Therefore the map $\theta\to \E\left[ V_{Z_0}^{-1}(\theta)\right]$ is well-defined, increasing  and continuous on $ [0, \underline {V_{\max}})$. Moreover, by monotone convergence, $\E[V^{-1}_{Z_0}(\theta)]\to \E[h_0^{Z_0}]=: \bar h_0$ as $\theta\to 0^+$ while, by assumption (H5), we have 
$$
\lim_{\theta\to \underline{V_{\max}}}\E[V^{-1}_{Z_0}(\theta)]=+\infty.
$$
So we can define, for $p\in(\bar h_0,+\infty)$,   $\bar F(p)$ as the unique real number in $(0,\underline{V_{\max}})$ such that 
$$
 \E\left[ V_{Z_0}^{-1}(\bar F(p)) \right]=p.
$$
We extend $\bar F$ by setting $\bar F(p)= 0$ for $p\in [0,  \bar h_0]$. Then $\bar F$ is increasing (in $[\bar h_0,+\infty)$), continuous and satisfies 
$$
\lim_{p \to \bar h_0^+} \bar F(p)= 0, \qquad \lim_{p \to +\infty} \bar F(p)= \underline {V_{\max}}.
$$

\medskip

In order to study the behavior of the solution to \eqref{eq:lineardata}, we introduce next the correctors of the problem. Given  $\theta\in (0,\underline {V_{\max}})$, we consider the random sequence 
$(c_i^\theta )$ defined by $c_0^\theta=0$ and $c_{i+1}^\theta = c_i^\theta+V_{Z_i}^{-1}(\theta)$. In other words, 
\be\label{ohebfdnl}
V_{Z_i}(c_{i+1}^\theta-c_i^\theta) = \theta \qquad \forall i\geq 0.
\ee
The reals $c_i^\theta$ represent the optimal position of the vehicle $i$ in the sense that all the vehicles drive at velocity $\theta$. In this sense, the sequence $(c^\theta_i)$ is related to the so-called hull functions (see the pionnering work of Aubry \cite{A1,A2}, and Aubry, Le Daeron \cite{ALD} as well as \cite{overdampedFKM,homogenisationOVF}).
We note that, by the law of large numbers,  there exists   $\Omega_0$ with $\mathbb P(\Omega_0)=1$  such that for every  $\omega\in \Omega_0$, we have
\be\label{piazrmhsndm}
\frac{c_n^\theta}{n} =  \frac{1}{n} \sum_{i=0}^{n-1} V_{Z_i}^{-1}(\theta) \to \E\left[ V_{Z_0}^{-1}(\theta) \right]\qquad \mbox{\rm as $n\to +\infty$}. 
\ee

\medskip

We now study the limit of $\frac {U_i(t)}t$ as $t\to +\infty$. Let us fix $p>h_0$ and $\ep>0$ small. We first define 
$$
\tilde U_i(t)= c_i^{p+\ep}+t \bar F(p+\ep),
$$
(where by a ``small" abuse of notation, $c_i^{p+\ep}:= c_i^{\bar F(p+\ep)}$, i.e., $\theta =\bar F(p+\ep)$ ). Note that, by \eqref{ohebfdnl}, $\tilde U_i$ solves 
$$
V_{Z_i}(\tilde U_{i+1}(t)-\tilde U_i(t))= V_{Z_i}(c_{i+1}^{p+\ep}-c_i^{p+\ep}) =\bar F(p+\ep) = \frac{d}{dt} \tilde U_i(t),\qquad \forall i\geq 0.
$$
 Moreover, by \eqref{piazrmhsndm}, we have a.s.
$$
\lim_n \frac{c_n^{p+\ep}}{n} =  \E\left[ V_{Z_0}^{-1}(\bar F(p+\ep))) \right] =p+\ep.
$$
Thus (since $U_i(0)=pi$), there exists a random integer $i_0$ such that 
$$
U_i(0)\leq \tilde U_i(0) \qquad \forall i\geq i_0. 
$$
By comparison (Proposition \ref{prop.compar}) we obtain 
$$
U_i(t)\leq \tilde U_i(t) = \tilde U_i(0)+t\bar F(p+\ep) \qquad \forall i\geq i_0, \; t\geq 0. 
$$
Hence, for any $i\geq i_0$, 
$$
\limsup_{t\to+\infty} \frac{U_i(t)}{t}\leq \bar F(p+\ep).
$$
This actually holds for any $i$ (since $i\to U_i(t)$ is increasing) and for any $\ep>0$. So 
$$
\limsup_{t\to+\infty} \frac{U_i(t)}{t}\leq \bar F(p)\qquad \forall i\geq 0.
$$

Next we want to prove the bound below, which is slightly more difficult. As above we set 
$$
\tilde U_i(t)= c_i^{p-\ep}+t \bar F(p-\ep),
$$
(where by again a ``small" abuse of notation, $c_i^{p-\ep}:= c_i^{\bar F(p-\ep)}$). Note that $\tilde U_i$ solves 
$$
V_{Z_i}(\tilde U_{i+1}(t)-\tilde U_i(t))= V_{Z_i}(c_{i+1}^{p-\ep}-c_i^{p-\ep}) =\bar F(p-\ep) = \frac{d}{dt} \tilde U_i(t),\qquad \forall i\geq 0
$$
and, by \eqref{piazrmhsndm}, we have a.s.
$$
\lim_n \frac{c_n^{p-\ep}}{n} =  \E\left[ V_{Z_0}^{-1}(\bar F(p-\ep))) \right] =p-\ep.
$$
Thus there exists a random integer $i_0$ such that 
$$
U_i(0)\geq \tilde U_i(0) \qquad \forall i\geq i_0. 
$$
By comparison (Proposition \ref{prop.compar}) we obtain 
$$
U_i(t)\geq  \tilde U_i(t) = \tilde U_i(0)+t\bar F(p-\ep) \qquad \forall i\geq i_0, \; t\geq 0. 
$$
Hence, for any $i\geq i_0$, 
$$
\liminf_{t\to+\infty} \frac{U_i(t)}{t}\geq \bar F(p-\ep).
$$
It remains to check that this inequality still holds for $i=0$ (and thus for all $i\geq 0$). 

For this we set 
$$
W_i(t)= U_i(t)-\tilde U_i(t),
$$
and note that 
\begin{align*}
\frac{d}{dt} W_i(t) & = V_{Z_i}(U_{i+1}(t)-U_i(t))-V_{Z_i}(\tilde U_{i+1}(t)-\tilde U_i(t)) \\
& = V_{Z_i}'(\sigma_i(t))\ (W_{i+1}(t)-W_i(t))
\end{align*}
for some $\sigma_i(t)\geq h_0$ that can be chosen measurable with respect to $t$ and $\omega$. We integrate this equality to get 
$$
W_i(t) = W_i(0) \exp\Bigl\{-\int_0^t V_{Z_i}'(\sigma_i(\tau))d\tau\Bigr\} + \int_0^t  V_{Z_i}'(\sigma_i(s))\exp\left\{-\int_s^t V_{Z_i}'(\sigma_i(\tau))d\tau\right\} W_{i+1}(s)ds.
$$
Now we consider the indices $i\in \{0, \dots i_0\}$ and set $C=\sup_{j\in \{0, \dots, i_0\}} |W_i(0)|$ (this is a random variable). 
As $V_k'\geq 0$, we have  (for $(w)_-=\max\{0, -w\}$)
\begin{align*}
W_i(t) & \geq  -C - \int_0^t  V_{Z_i}'(\sigma_i(s))\exp\left\{-\int_s^t V_{Z_i}'(\sigma_i(\tau))d\tau\right\}ds \|(W_{i+1})_-\|_{L^\infty(0,t)}\\
& \geq -C - \left(1- \exp\left\{-\int_0^t V_{Z_i}'(\sigma_i(\tau))d\tau\right\}\right) \|(W_{i+1})_-\|_{L^\infty(0,t)} \\
& \geq -C - \|(W_{i+1})_-\|_{L^\infty(0,t)}.
\end{align*}
We infer by induction that 
$$
\|(W_0)_-\|_{L^\infty(0,t)} \leq C i_0+ \|(W_{i_0})_-\|_{L^\infty(0,t)} = C i_0,
$$
since $W_{i_0}\geq  0$. This shows that 
$$
U_0(t)\geq \tilde U_0(t)- C{i_0}= \tilde U_0(0)+ \bar F(p-\ep)t-C i_0, 
$$
and therefore that 
$$
\liminf_{t\to+\infty} \frac{U_0(t)}{t} \geq  \bar F(p-\ep). 
$$
Since $i\to U_i(t)$ is nondecreasing, this holds for any $i\geq 0$. And, finally, as $\ep$ is arbitrary, this shows that 
$$
\liminf_{t\to+\infty} \frac{U_i(t)}{t} \geq  \bar F(p)\qquad \forall i\geq 0. 
$$
\medskip

It just remains to treat the case $p\in[0,\bar h_0]$. We begin to show that $\displaystyle{\lim_{p\to \bar h_0^+}\bar F(p)=0}$. Taking the limit $\theta\to 0^+$ in the equality $\E[V^{-1}_{Z_0}(\theta)]=\bar F^{-1}(\theta)$, we deduce that $\displaystyle{\lim_{\theta\to 0^+}\bar F^{-1}(\theta)=\bar h_0}$, which implies that 
$$\displaystyle{\lim_{p\to \bar h_0^+}\bar F(p)=0}.$$

 Now, let $p\in[0,\bar h_0]$. Using that $U_i(t)\ge 0$, we just have to show that $\limsup_{t\to +\infty} \frac {U_i(t)}t\le 0$.
 Let $\varepsilon>0$ and consider $\tilde U_i$  solution of 
$$
\frac{d}{dt} \tilde U_i(t)= V_{Z_i} (\tilde U_{i+1}(t)-\tilde U_i(t)), \; t\geq 0, \qquad \tilde U_i(0)= (\bar h_0+\varepsilon)i \qquad \forall i\geq 0. 
$$
Using the previous result (i.e., the case $p>\bar h_0$), we get that
$$\lim_{t\to +\infty}\frac {\tilde U_i(t)}{t}=\bar F(\bar h_0+\varepsilon).$$
Using that, by the comparison principle, we have $\tilde U_i(t)\ge  U_i(t)$ for all $i\ge 0$ and $t\ge 0$, we deduce that
$$\limsup_{t\to +\infty}\frac { U_i(t)}{t}\le\lim_{t\to +\infty}\frac {\tilde U_i(t)}{t}=\bar F(\bar h_0+\varepsilon).$$
Letting $\ep\to 0$ completes the proof of the proposition.
\end{proof}

\section{Proof of convergence}
We recall that the initial condition $(U^{\ep,0}_i)_{i\in \Z}$ is fixed and we consider the solution of
$$
\frac{d}{dt} U^\ep_i(t)= V_{Z_i}(U^\ep_{i+1}(t)-U^\ep_i(t)), \; t\geq 0, \; i\in \Z\qquad U^\ep_i(0)= U^{\ep,0}_i,\; i\in \Z. 
$$
We recall that we assumed the existence of a Lipschitz continuous function $u_0:\R\to \R$ such that 
$$
\lim_{\ep\to 0, \  \ep i \to x}  \ep U^{\ep,0}_i = u_0(x).
$$

\begin{proof}[Proof of Theorem \ref{th:CV}]
Let 
$$
u^*(x,t)= \limsup_{\ep\to 0, \ \ep (i,s)\to (x,t)} \ep U^\ep_i(s)\quad{\rm and}\quad u_*(x,t)= \liminf_{\ep\to 0, \ \ep (i,s)\to (x,t)} \ep U^\ep_i(s).
$$
We want to prove that $u^*$ and $u_*$ are respectively sub- and super-solution of \eqref{eq:macro}. Indeed, if we show these statements, then by the comparison principle for \eqref{eq:macro} (see Proposition \ref{pro:PCmacro}), we get $u^*\le u_*$. The reverse inequality being obvious, we get the convergence result.
\medskip

We begin with the initial condition. Since the velocity $V_Z$ is uniformly bounded and non-negative, we have that
$$\ep U_i^\ep (0)\le \ep U_i^\ep (s)\le \ep U_i^\ep(0)+ C\ep s.$$
Taking the $\limsup$ as $\ep\to 0$, with $\ep(i,s)\to (x,0)$, we infer that
$$u^*(x,0)=u_0(x).$$

We now turn to the equation. We only prove that $u^*$ is a sub-solution since the proof for $u_*$ is similar. Let $\phi$ be a smooth test function such that $u^*-\phi$ has a strict maximum at some point $(\bar x, \bar t)$. Then there exists a subsequence of $\ep$, still denoted in the same way,  and $(i_\ep, s_\ep)$, such that the map 
$$
(i,s)\to \ep U^\ep_i(s)- \phi(\ep i, \ep s)
$$
has a maximum at $(i_\ep, s_\ep)$. Moreover we have $(\ep i_\ep,\ep s_\ep)\to (\bar x,\bar t)$. The optimality of $(i_\ep, s_\ep)$ can be rewritten as 
\be\label{lajzrnzredHHH}
\frac{1}{\ep} \left( \phi(\ep(i_\ep +  i), \ep (s_\ep+ s))-  \phi(\ep i_\ep , \ep s_\ep)\right) \geq 
U^\ep_{i_\ep +i}(s)- U^\ep_{i_\ep}(s_\ep)
\qquad \forall (i,s). 
\ee
Let us fix $\delta_\ep>0$, with $\delta_\ep\to 0$ as $\ep\to 0$,  to be choose later.
Then by \eqref{lajzrnzredHHH}, we have, for any $i\geq 0$,  
\begin{align*}
U^\ep_{i_\ep +i}(s_\ep-\delta_\ep/\ep) & \leq  U^\ep_{i_\ep}(s_\ep) + \frac{1}{\ep} \left( \phi(\ep(i_\ep +  i), \ep s_\ep -\delta_\ep))-  \phi(\ep i_\ep , \ep s_\ep)\right) \\
& \leq  U^\ep_{i_\ep}(s_\ep) + \partial_x \phi(\ep i_\ep, \ep s_\ep-\delta_\ep)i + C\ep i^2 +
\frac{1}{\ep} \left( \phi(\ep i_\ep , \ep s_\ep -\delta_\ep))-  \phi(\ep i_\ep , \ep s_\ep)\right).
\end{align*}
Let us set $p= \partial_x\phi(\bar x,\bar t )$ and fix $\theta>0$ small. 
The RHS of the above inequality can be bounded from above as follows, for $\ep$ small enough: 
\begin{align}
U^\ep_{i_\ep +i}(s_\ep-\delta_\ep/\ep) & 
 \leq  U^\ep_{i_\ep}(s_\ep) + (p+\theta) i + 
\frac{1}{\ep} \left( \phi(\ep i_\ep , \ep s_\ep -\delta_\ep))-  \phi(\ep i_\ep , \ep s_\ep)\right)\notag \\ 
& \qquad \qquad \qquad \qquad \qquad  \forall i\in \{0, \dots, \theta/(C\ep)\} .\label{mjnzraerdHHH}
\end{align}

We now need the following localization lemma: 
\begin{Lemma}\label{lem.local}
Let $(U_i)$ and $(\tilde U_i)$ be two solutions to \eqref{eq:model} such that $U_i(0)\leq \tilde U_i(0)$ for $i\in \{0, \dots, K\}$ (where $K\geq 1$). Then 
$$
U_0(t)\leq \tilde U_0(t)+ V_{\max} t  \left(1-\exp(-\alpha t)\right)^K\qquad \forall t\geq 0, 
$$
where $\alpha=\sup_k \|\partial_x V_k\|_\infty$.
\end{Lemma}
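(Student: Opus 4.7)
My plan is to work with the difference $W_i(t) := U_i(t) - \tilde U_i(t)$ and propagate the sign information from the high indices down to index $0$, losing a factor $(1-e^{-\alpha t})$ at each step thanks to the dissipative structure of the linearised equation. By the mean value theorem applied to $V_{Z_i}$, one has
\begin{equation*}
\frac{d}{dt}W_i(t) = V_{Z_i}(U_{i+1}(t)-U_i(t)) - V_{Z_i}(\tilde U_{i+1}(t)-\tilde U_i(t))= \beta_i(t)\bigl(W_{i+1}(t) - W_i(t)\bigr),
\end{equation*}
with $\beta_i(t) := V_{Z_i}'(\sigma_i(t)) \in [0,\alpha]$ for some measurable $\sigma_i$, exactly as in the final step of the proof of Proposition \ref{prop.limlin}. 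Duhamel's formula then yields
\begin{equation*}
W_i(s) = W_i(0)\, e^{-\int_0^s \beta_i(\tau)d\tau} + \int_0^s \beta_i(r)\, e^{-\int_r^s \beta_i(\tau)d\tau}\, W_{i+1}(r)\, dr.
\end{equation*}

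I would then introduce $M_i(t) := \sup_{s\in[0,t]} (W_i(s))_+$ and exploit the elementary identity $\int_0^s \beta_i(r)\, e^{-\int_r^s \beta_i(\tau)d\tau}\, dr = 1 - e^{-\int_0^s \beta_i(\tau)d\tau}$, which is bounded above by $1 - e^{-\alpha t}$. Since $W_i(0) \leq 0$ for every $i \in \{0,\dots,K\}$ by hypothesis, the first term in Duhamel is nonpositive for such $i$, while the second is bounded above by $(1-e^{-\alpha t})\,M_{i+1}(t)$ (using $W_{i+1}(r)\le (W_{i+1}(r))_+\le M_{i+1}(t)$). Taking the supremum over $s\in[0,t]$ gives the recurrence
\begin{equation*}
M_i(t) \leq (1-e^{-\alpha t})\, M_{i+1}(t), \qquad i \in \{0,\dots,K-1\},
\end{equation*}
which after $K$ iterations yields $M_0(t) \leq (1-e^{-\alpha t})^K\, M_K(t)$.

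To close the argument it remains to bound $M_K(t)$ directly: since $W_K(0)\leq 0$, since $V_z \in [0,V_{\max}]$ implies $U_K(s) \leq U_K(0) + V_{\max} s$, and since $\tilde U_K$ is nondecreasing in time, one obtains $W_K(s) \leq W_K(0) + V_{\max} s \leq V_{\max} t$ and hence $M_K(t) \leq V_{\max} t$. Combining with the iterated recurrence gives $W_0(t) \leq M_0(t) \leq V_{\max} t (1 - e^{-\alpha t})^K$, which is the claim. I do not anticipate a real obstacle here; the only delicate point is recognising that the hypothesis permits exactly $K$ iterations of the recurrence and no more, since no sign control on $W_{K+1}(0)$ is available, and that is precisely why the remaining index $K$ has to be handled by the crude $V_{\max}t$ bound.
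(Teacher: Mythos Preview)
Your proof is correct and follows essentially the same route as the paper: you linearise via the mean value theorem, solve by Duhamel, use the identity $\int_0^s \beta_i e^{-\int_r^s \beta_i}\,dr = 1-e^{-\int_0^s \beta_i}\leq 1-e^{-\alpha t}$, iterate $K$ times, and close with the crude $V_{\max}t$ bound at index $K$. The only cosmetic difference is the sign convention: the paper sets $W_i=\tilde U_i-U_i$ and works with $(W_i)_-$, whereas you set $W_i=U_i-\tilde U_i$ and work with $(W_i)_+$, which is of course equivalent.
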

We postpone the proof of the lemma and proceed with the ongoing argument. 
Let $(\tilde U_i)$ be the solution to 
$$
\frac{d}{dt} \tilde U_i(t)= V_{Z_i}(\tilde U_{i+1}(t)-\tilde U_i(t)), \; t\geq 0, \; i\in \N\qquad \tilde U_i(0)=  
(p+\theta) i,\; i\in \N. 
$$
We know from Proposition \ref{prop.limlin} that, a.s. in $\Omega_0$,  
$$
\lim_{t\to +\infty} \frac{1}{t}  \tilde U_0(t) = \bar F(p+\theta).
$$
We consider the flow generated by the left-hand side of \eqref{mjnzraerdHHH} (i.e., $(U^\ep_i)$) and  by its right-hand side (i.e., up to a constant, $(\tilde U_i)$) at time $\delta_\ep/\ep$. We have, by  Lemma \ref{lem.local} (with $t=\delta_\ep/\ep$ and $K= \theta/(C\ep)$): 
\begin{align}
U^\ep_{i_\ep}(s_\ep) & 
 \leq  U^\ep_{i_\ep}(s_\ep) +  \tilde U_0(\delta_\ep/\ep) + V_{\max} \frac {\delta_\ep}\ep \left(1-\exp(-\alpha \delta_\ep/\ep)\right)^{\theta/(C\ep)} \notag  \\
 & \qquad  +\frac{1}{\ep} \left( \phi(\ep i_\ep , \ep s_\ep -\delta_\ep))-  \phi(\ep i_\ep , \ep s_\ep)\right).\label{mjnzraerd}
\end{align}
Choosing $\delta_\ep:= -\gamma \ep \ln(\ep)$ where $\gamma>0$ is small enough (such that $\alpha\gamma<1$),  we have 
$$
\delta_\ep/\ep\to +\infty\qquad {\rm and}\qquad 
 \left(1-\exp(-\alpha \delta_\ep/\ep)\right)^{\theta/(C\ep)}\to 0.
$$ 
Dividing \eqref{mjnzraerd} by $\delta_\ep/\ep$ and letting $\ep\to 0$ gives
$$
0\leq \bar F(p+\theta) -\partial_t \phi(\bar x, \bar t).
$$
In view of the definition of $p$ and the fact that $\theta>0$ is arbitrary, we conclude that $u^\ep$ is a subsolution to the equation. 
\end{proof}

\begin{proof}[Proof of Lemma \ref{lem.local}]
Let $W_i(t):= \tilde U_i(t)-U_i(t)$. Then $(W_i)$ solves 
\begin{align*}
\frac{d}{dt} W_i(t) & = V_{Z_i}(\tilde U_{i+1}(t)- \tilde U_i(t))-  V_{Z_i}( U_{i+1}(t)-  U_i(t)) \\
& = V'_{Z_i}(\sigma_i(t)) (W_{i+1}(t)-W_i(t)),
\end{align*}
for some $\sigma_i$ between $ U_{i+1}(t)-  U_i(t)$ and $\tilde U_{i+1}(t)- \tilde U_i(t)$. 
We integrate this equation and find 
$$
W_i(t) = W_i(0) \exp\Bigl\{-\int_0^t V_{Z_i}'(\sigma_i(\tau))d\tau\Bigr\} + \int_0^t  V_{Z_i}'(\sigma_i(s))\exp\left\{-\int_s^t V_{Z_i}'(\sigma_i(\tau))d\tau\right\} W_{i+1}(s)ds.
$$
For $i\in \{0, \dots, K-1\}$, we have, since $W_i(0)\geq 0$ and $V_{Z_i}'\geq 0$, 
\begin{align*}
W_i(t)& \geq  - \| (W_{i+1})_-\|_{[0,t]}  \int_0^t  V_{Z_i}'(\sigma_i(s))\exp\{-\int_s^t V_{Z_i}'(\sigma_i(\tau))d\tau\} ds\\ 
& \geq - \| (W_{i+1})_-\|_{[0,t]}  \left( 1- \exp \{-\int_0^t V_{Z_i}'(\sigma_i(\tau))d\tau\}\right)  \\
& \geq  - \| (W_{i+1})_-\|_{[0,t]}  \left( 1- \exp \{-\alpha t\}\right)  .
\end{align*}
By induction, we infer that 
$$
\|(W_0)_-\|_{[0,t]} \leq \| (W_{K})_-\|_{[0,t]}  \left( 1- \exp \{-\alpha t\}\right)^K,
$$
where, because $V_Z$ is uniformly bounded by $V_{\max}$ and $W_K(0)\geq 0$, $\| (W_{K})_-\|_{[0,t]} \leq V_{\max} t$. 
\end{proof}
\vspace{10mm}
\noindent{\bf ACKNOWLEDGMENTS}
 
This project was co-financed by the European Union with the European regional development fund (ERDF,18P03390/18E01750/18P02733) and by the Normandie Regional Council via the M2SiNUM project and by ANR MFG (ANR-16-CE40-0015-01).

\bibliographystyle{siam}

\end{document}